\documentclass[11pt]{amsart}
\usepackage{amsmath,amssymb,amsfonts,amsthm,mathtools}
\usepackage[margin=1.05in]{geometry}
\usepackage[hidelinks]{hyperref}

\numberwithin{equation}{section}
\newtheorem{theorem}{Theorem}[section]
\newtheorem{corollary}[theorem]{Corollary}

\newtheorem{lemma}[theorem]{Lemma}
\theoremstyle{remark}
\newtheorem{remark}[theorem]{Remark}

\newcommand{\R}{\mathbb R}
\newcommand{\pcap}{\operatorname{Cap}}

\title[Outward minimizing $p$-capacity]
{Outward minimizing $p$-capacity, horizons,\\ and Schwarzschild rigidity}
\author{Sehong Park}
\address{Human-Centered Artificial Intelligence Research Institute,
Ewha Womans University, Seoul, Republic of Korea}
\email{pshong21@ewha.ac.kr}
\date{}
\subjclass[2020]{53C21, 53E10, 35J92, 83C99}
\keywords{ADM mass, $p$-capacity, outward minimizing surface, horizon, Schwarzschild manifold}

\begin{document}
\raggedbottom

\begin{abstract}
Let $(M^3,g)$ be a complete Riemannian manifold diffeomorphic to
$\R^3\setminus\{0\}$, with nonnegative scalar curvature.
Assume that a distinguished end is asymptotically flat,
with ADM mass $m_+$.
For each $p\in(1,3)$, define $c_{O,p}$ as the infimum of the
Schwarzschild-normalized $p$-capacity over outward-minimizing
finite-perimeter boundaries separating the two ends.
We prove that $m_+\ge c_{O,p}$ whenever $A(g)>0$, where $A(g)$
is the infimum of the areas of boundaries separating the two ends.  Equality
at a single exponent
produces a least-area horizon, forces its exterior to be the Schwarzschild
exterior of mass $m_+$, and yields equality at every exponent.  In the
equality case, if the second end is also asymptotically flat, its mass
satisfies $m_-\ge m_+$, with equality precisely for the two-sided spatial
Schwarzschild manifold.  We also show that a strict gap between $c_{O,p}$
and the unconstrained capacity infimum $c_{M,p}$ detects a horizon.
\end{abstract}

\maketitle

\section{Introduction}

Let $(M,g)$ be a complete orientable Riemannian $3$-manifold diffeomorphic
to $\R^3\setminus\{0\}$, with nonnegative scalar curvature $R_g\ge0$.
Denote its two ends by $e_+$ and $e_-$, and assume that $e_+$ is
asymptotically flat: in an asymptotically flat chart,
\[
 g_{ij}-\delta_{ij}=O_2(|x|^{-\tau}), \quad \tau>\tfrac12,
 \qquad R_g\in L^1,
\]
so that the ADM mass $m_+$ of $e_+$ \cite{ADM} is well defined and
chart-independent \cite{Bartnik86}.

The positive mass theorem of Schoen--Yau \cite{SchoenYau79} asserts
that on complete asymptotically flat manifolds without boundary the mass
is nonnegative, with equality only for the Euclidean space.  In the
distinguished-end setting used here, nonnegativity remains valid when
the other ends are arbitrary \cite{LesourdUngerYau, LLUIncomplete}.
When the manifold has an outermost minimal boundary, modeling an
apparent horizon, the Riemannian Penrose inequality
$m\ge\sqrt{|\partial M|/16\pi}$ was proved by Huisken--Ilmanen
\cite{HuiskenIlmanen} in the connected case via weak inverse mean curvature flow and by Bray
\cite{Bray} in general via a conformal flow of metrics; in each case, the
spatial Schwarzschild manifold is the unique case of equality.

The capacity enters the Penrose inequality through Bray's proof: there,
the monotonicity of the mass along the conformal flow rests on the
mass--capacity inequality
\begin{equation}\label{eq:bray}
 m\ \ge\ c_\Sigma
\end{equation}
for a minimal boundary $\Sigma$, with equality only for Schwarzschild
exteriors.  Bray--Miao \cite{BrayMiao} obtained a sharp capacity upper
bound in terms of area and Willmore energy, and Xiao \cite{Xiao}
extended both estimates to the $p$-capacity, $1<p<3$, under a
nonnegative Hawking mass assumption.  In the harmonic case $p=2$,
Miao \cite{MiaoPMJ} proved the mass--capacity estimate
without the condition $\int_\Sigma H^2\le16\pi$:
\begin{equation}\label{eq:miaoratio}
 \frac{m}{c_\Sigma}\ \ge\ 1-\Big(\frac1{16\pi}\int_\Sigma H^2\Big)^{1/2}
\end{equation}
for a connected boundary $\Sigma$ with $H_2(M,\Sigma)=0$; see also
\cite{HMT} for $p$-harmonic potentials.  Recently Xia--Yin--Zhou
\cite{XYZ} found monotone quantities for $p$-capacitary functions
calibrated on Schwarzschild exteriors, generalizing Miao's inequalities
to all $p\in(1,3)$; the mass-to-$p$-capacity inequality for minimal
boundaries was also obtained by Mazurowski--Yao \cite{MazurowskiYao}.

The minimal-boundary case of these inequalities assumes the existence
of a horizon. A related question is which geometric conditions force
a horizon to exist.  Schoen--Yau \cite{SchoenYau83} showed that sufficiently condensed matter
forces an apparent horizon, and Shi--Tam \cite{ShiTam07} detected
horizons in compact manifolds with boundary through conditions on
quasi-local mass.  For complete noncompact fill-ins of Bartnik data
\cite{LLU}, horizon existence criteria comparing quasi-local masses
with an asymptotic area threshold were obtained in joint work with Miao
\cite{MiaoPark}. 

We establish a mass--capacity inequality without assuming a horizon
and characterize its equality case.
We also show that a gap between two global capacities forces
a horizon to exist.

Zhu \cite{Zhu} used Gromov's $\mu$-bubble method
\cite{GromovFour} to prove a mass--systole inequality
and characterize its equality case.
When the infimum of separating areas is positive, his construction
produces strictly outward-minimizing spheres with uniformly bounded
areas and mean curvatures converging uniformly to zero
\cite[Proposition 2.2]{Zhu}.

Miao \cite{MiaoJGA} introduced a global harmonic capacity,
$\mathfrak{c}(M,g)$, by taking an infimum over inner boundaries.
He proved $m\ge 2\mathfrak{c}(M,g)$ under assumptions that include
a Ricci lower bound.
Bi--Zhu \cite{BiZhu} established the corresponding inequality
for spin manifolds with arbitrary ends, without the Ricci assumption.
Miao \cite{MiaoMono} also obtained Schwarzschild rigidity results
from monotonicity formulas for harmonic functions.

\subsection*{Setting and main results}
Let $\mathcal S$ be the collection of all boundaries
$\Sigma=\partial D_\Sigma$ of finite-perimeter regions
$D_\Sigma\subset M$ separating the two ends:
$D_\Sigma$ contains an end-neighborhood of $e_-$, and its
complement contains an end-neighborhood of $e_+$.
A smooth minimal member of $\mathcal S$ is called a \emph{horizon}.
Throughout, inclusions between finite-perimeter regions are understood
up to sets of volume zero.

Let $P(D)$ denote the perimeter of $D$ with respect to $g$.
For $\Sigma\in\mathcal S$, write
\[
 |\Sigma|:=P(D_\Sigma).
\]
Define
\begin{equation}\label{eq:area-infimum}
 A(g):=\inf_{\Sigma\in\mathcal S}|\Sigma|.
\end{equation}

For $\Sigma\in\mathcal S$ and $1<p<3$, define its exterior $p$-capacity
by
\[
 \pcap_p(\Sigma):=\inf_f\int_M|\nabla f|^p\,dV_g,
\]
where the infimum is over $f\in W^{1,p}_{\mathrm{loc}}(M)$ of finite
$p$-energy such that $f=0$ almost everywhere on $D_\Sigma$ and $f\to1$
at $e_+$.  Put
\[
 a=a_p:=\frac{3-p}{p-1},
 \qquad
 I_a(k):=\int_0^k s^{a-1}(1+s)^{-2a}\,ds \quad (0<k\le 1),
\]
and define the \emph{Schwarzschild-normalized $p$-capacity}
\begin{equation}\label{eq:normalized-capacity}
 c_{\Sigma,p}:=2\,I_a(1)^{1/a}
 \left(\frac{\pcap_p(\Sigma)}{4\pi}\right)^{1/(3-p)}.
\end{equation}
By the explicit computation of the $p$-capacitary potential on
Schwarzschild exteriors \cite{XYZ}, at the horizon of the
spatial Schwarzschild manifold of mass $m$ one has $c_{\Sigma,p}=m$ for
\emph{every} $p\in(1,3)$; for $p=2$, since $a=1$ and $I_1(1)=\tfrac12$,
the quantity \eqref{eq:normalized-capacity} is the usual normalized
harmonic capacity $c_\Sigma=\pcap_2(\Sigma)/4\pi$.  We also note
$I_a(1)=\tfrac12\mathcal B(a,a)$ \cite{XYZ}.

A boundary $\Sigma\in\mathcal S$ is \emph{outward minimizing} (toward
$e_+$) if
\[
 |\Sigma|\le P(F)
\]
for every finite-perimeter region $F\supset D_\Sigma$ with
$F\setminus D_\Sigma\Subset M$.  Let $\mathcal S_O\subset\mathcal S$
denote this subclass. Set
\begin{equation}\label{eq:global-capacities}
 c_{M,p}:=\inf_{\Sigma\in\mathcal S}c_{\Sigma,p},
 \qquad
 c_{O,p}:=\inf_{\Sigma\in\mathcal S_O}c_{\Sigma,p},
\end{equation}
so that trivially $c_{M,p}\le c_{O,p}$.  Exterior $p$-capacity is
monotone:
\begin{equation}\label{eq:capacity-monotonicity}
 D_{\Sigma_1}\subset D_{\Sigma_2}
 \ \Longrightarrow\ c_{\Sigma_1,p}\le c_{\Sigma_2,p}.
\end{equation}
See Appendix \ref{app:lemmas} for the strict form used below.

\begin{theorem}[Mass--capacity inequality and rigidity]\label{thm:main}
Assume $R_g\ge0$ and $A(g)>0$.  Then, for every $p\in(1,3)$,
\begin{equation}\label{eq:main-inequality}
 m_+\ \ge\ c_{O,p}.
\end{equation}
Equality holds for some $p_0\in(1,3)$ if and only if there is a horizon
$\Sigma_*\in\mathcal S_O$ such that
\[
 |\Sigma_*|=A(g)=16\pi m_+^2,
\]
the $e_+$-side of $\Sigma_*$ is the Schwarzschild exterior of mass
$m_+$, and every $\Gamma\in\mathcal S_O$ encloses $\Sigma_*$.  In that
case,
\[
 c_{\Sigma_*,p}=c_{O,p}=m_+ \qquad\text{for every } p\in(1,3).
\]
\end{theorem}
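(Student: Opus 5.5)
The plan is to use Zhu's $\mu$-bubble spheres to find outward-minimizing separating boundaries that are almost minimal, and then apply a mass-to-$p$-capacity estimate with a correction term that vanishes as the mean curvature goes to zero.

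\textbf{Step 1: almost-minimal outward-minimizing spheres.} Since $A(g)>0$, \cite[Proposition 2.2]{Zhu} gives a sequence of strictly outward-minimizing spheres $\Sigma_k\in\mathcal S_O$. Their areas are uniformly bounded and $\sup_{\Sigma_k}|H|\to0$.

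\textbf{Step 2: mass--capacity estimate on each $\Sigma_k$.} On the exterior region $E_k$ (the $e_+$-side of $\Sigma_k$), I would use the Xia--Yin--Zhou monotone quantity \cite{XYZ} built from the $p$-capacitary potential of $\Sigma_k$. This is the $p$-analogue of Miao's estimate \eqref{eq:miaoratio}. It should give
\[
 m_+\ \ge\ c_{\Sigma_k,p}\,\Phi_p\!\left(\Big(\tfrac1{16\pi}\int_{\Sigma_k}H^2\Big)^{1/2}\right),
\]
where $\Phi_p$ is continuous and $\Phi_p(0)=1$.

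The hypotheses this needs should hold here:
\begin{itemize}
\item $H_2(E_k,\Sigma_k)=0$, because $M\cong\R^3\setminus\{0\}$ and $\Sigma_k$ is a separating sphere;
\item $R_g\ge0$ is assumed;
\item connectedness of $\Sigma_k$ holds since it is a sphere.
\end{itemize}

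We have $\int H^2\le|\Sigma_k|\sup H^2\to0$. Also $c_{\Sigma_k,p}\ge c_{O,p}$ because $\Sigma_k\in\mathcal S_O$. Letting $k\to\infty$ gives \eqref{eq:main-inequality}.

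The main obstacle is here. I need the XYZ estimate in a form valid for nonminimal, possibly not mean-convex boundaries, with an explicit $\int H^2$-type error. If XYZ only covers $H\ge0$ or needs extra hypotheses, the fallback is to perturb $\Sigma_k$ conformally, using the fact that $H\to0$ uniformly.

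\textbf{Step 3: equality case, forward direction.} Suppose $m_+=c_{O,p_0}$. Then $c_{\Sigma_k,p_0}\to m_+$ along the sequence above. Since area and $|H|$ are uniformly controlled and the $\Sigma_k$ are outward minimizing, standard compactness for (almost) minimizing boundaries gives a subsequential limit $\Sigma_*\in\mathcal S$. It is outward minimizing and stable minimal, hence a smooth horizon. Two things must be ruled out:
\begin{itemize}
\item escape to $e_+$: this would make the capacity blow up, contradicting $c_{\Sigma_k,p_0}\to m_+<\infty$;
\item escape to $e_-$: this is prevented by $A(g)>0$ together with outward minimization.
\end{itemize}
Capacity is continuous under this convergence, so $c_{\Sigma_*,p_0}=m_+$.

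Equality in the minimal-boundary mass--$p$-capacity inequality (\cite{XYZ}, \cite{MazurowskiYao}) then forces the exterior of $\Sigma_*$ to be the Schwarzschild exterior of mass $m_+$. Hence $|\Sigma_*|=16\pi m_+^2$ and $c_{\Sigma_*,p}=m_+$ for every $p$.

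Now let $\Gamma\in\mathcal S_O$ and suppose it does not enclose $\Sigma_*$. Then $D_\Gamma\cap D_{\Sigma_*}$ is a strictly smaller region, and the strict monotonicity of \eqref{eq:capacity-monotonicity} from Appendix \ref{app:lemmas} gives $c_{\Gamma,p_0}<m_+$, a contradiction. I would make this rigorous by replacing $\Gamma$ with the outward-minimizing hull of $D_\Gamma\cap D_{\Sigma_*}$. In Schwarzschild the horizon is strictly outward minimizing and area-minimizing among enclosing surfaces, so any $\Gamma$ enclosing it satisfies $|\Gamma|\ge|\Sigma_*|$. Applying Step 1 to lower bound $A(g)$ then gives $A(g)=|\Sigma_*|$.

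\textbf{Step 4: equality case, converse.} Given such a $\Sigma_*$, every $\Gamma\in\mathcal S_O$ encloses it, so \eqref{eq:capacity-monotonicity} gives $c_{\Gamma,p}\ge c_{\Sigma_*,p}=m_+$ for every $p$. Combined with $\Sigma_*\in\mathcal S_O$, this yields $c_{O,p}=m_+$ for all $p$.
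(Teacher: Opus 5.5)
\textbf{Gap: compactness in Step 3.} Your Steps 1, 2 and 4 match the paper's argument. In Step 2, \eqref{eq:xyz-mass} only needs $\delta_\Sigma<1$, so no conformal fallback is required. The gap is in Step 3. You dismiss escape of the $\Sigma_k$ through $e_-$ as ``prevented by $A(g)>0$ together with outward minimization.'' That argument uses nothing from the equality hypothesis, and it is false in general. Consider the extremal Reissner--Nordstr\"om slice $(\R^3\setminus\{0\},(1+m/|x|)^2\delta)$. It has $R_g>0$, an asymptotically flat end, $A(g)=4\pi m^2>0$, and no closed minimal surface, so its almost-minimal outward-minimizing spheres run off down the cylindrical throat. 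The paper's proof of Theorem \ref{thm:gap} rests on exactly this escape when no horizon exists. Knowing $c_{\Sigma_k,p_0}\to m_+$ does not rule it out either: a sequence drifting into $e_-$ with capacities decreasing to $m_+$ is not contradictory by itself. Your ``standard compactness'' also ignores the possibility of a noncompact finite-area limit.

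\textbf{What the paper does instead.} The missing idea is to turn the equality into two-sided area pinching along a \emph{nested} family, which is the input Zhu's rigidity argument needs. The paper replaces your sequence by Zhu's nested chain $\Lambda_j\in\mathcal S_O$, with $D_{\Lambda_{j+1}}\subset D_{\Lambda_j}$ and $H\equiv\eta_j\to0$. Along this chain, areas and $p_0$-capacities are non-increasing, with limits $a_\infty$ and $c_\infty$. Combining \eqref{eq:xyz-area} with \eqref{eq:hawking-bound} gives $c_{O,p_0}\le c_\infty\le\sqrt{a_\infty/16\pi}\le m_+$. Hence $a_\infty=16\pi m_+^2$ and $a_\infty\le|\Lambda_k|\le a_\infty+O(\eta_k^2)$. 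The lower bound here comes from nestedness. Zhu's Proposition 2.2 sequence lacks it: it only gives $|\Sigma_k|\ge A(g)$, and $A(g)=16\pi m_+^2$ is not yet known. Zhu's coarea estimate then forces the spheres to meet a fixed compact set, Zhou--Zhu curvature estimates give local smooth convergence, and Gromov--Lawson excludes a noncompact limit. Once $\Sigma_*$ exists, your use of the equality case of the minimal-boundary $p$-capacity inequality is an acceptable substitute for Penrose rigidity. The paper avoids the capacity-continuity step by reading $|\Sigma_*|=16\pi m_+^2$ off the pinching and applying Huisken--Ilmanen.

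\textbf{Two smaller repairs.} In the enclosure step, the contradiction is $c_{\partial E,p_0}<c_{\Sigma_*,p_0}$ for $E=D_\Gamma\cap D_{\Sigma_*}$, not $c_{\Gamma,p_0}<m_+$. This needs $\partial E\in\mathcal S_O$ (Lemma \ref{lem:hull}) and a $C^{1,1}$ replacement of $E$ before Lemma \ref{lem:strict} applies. In the least-area step, Step 1 only bounds $A(g)$ from above. Separating surfaces that do not enclose $\Sigma_*$ must be handled by passing to the outward-minimizing hull $(D_\Sigma)^\sharp$. Its boundary lies in $\mathcal S_O$, hence encloses $\Sigma_*$, and has perimeter at most $|\Sigma|$.
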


Theorem \ref{thm:main} may be viewed as an analogue of Bray's inequality
\eqref{eq:bray} and of its $p$-capacity extensions \cite{Xiao, XYZ,
MazurowskiYao}, requiring no horizon: the minimal boundary is replaced by
the infimum over outward-minimizing separating boundaries.

When the second end is also asymptotically flat, the equality horizon
compares the two masses.

\begin{corollary}[Two-end comparison and Schwarzschild rigidity]
\label{cor:mass-comparison}
Assume equality holds in \eqref{eq:main-inequality} for some
$p_0\in(1,3)$.  If $e_-$ is asymptotically flat with ADM mass $m_-$, then
\[
 m_-\ \ge\ m_+,
\]
and equality holds if and only if $(M,g)$ is the two-sided spatial
Schwarzschild manifold of mass $m_+$.
\end{corollary}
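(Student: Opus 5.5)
The plan is to use the equality case of Theorem~\ref{thm:main}: it gives a least-area horizon $\Sigma_*$ whose $e_+$-side is the Schwarzschild exterior of mass $m_+$, with $|\Sigma_*|=A(g)=16\pi m_+^2$. We then study the $e_-$-side, i.e.\ the region $D_{\Sigma_*}$, and apply the Riemannian Penrose inequality there. Note first that $m_+>0$. Indeed $A(g)>0$, and $|\Sigma_*|=16\pi m_+^2$ forces $m_+\neq0$, while $m_+\ge c_{O,p}\ge0$.

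The key steps, in order, are as follows.

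\textbf{Step 1: $\Sigma_*$ is outer-minimizing toward $e_-$.} Since $|\Sigma_*|=A(g)$ is the infimum of areas over all separating boundaries in $\mathcal S$, any competitor $\Sigma'\in\mathcal S$ satisfies $|\Sigma'|\ge|\Sigma_*|$. This includes competitors whose region $D_{\Sigma'}\subset D_{\Sigma_*}$ differs from $D_{\Sigma_*}$ by a compact set. Hence $\Sigma_*$, viewed as the inner boundary of $N:=\overline{D_{\Sigma_*}}$ with $e_-$ as the distinguished asymptotically flat end, is a minimal boundary that is area-minimizing in its homology class. It is therefore outermost in the sense needed for Penrose. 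If it is not strictly outermost, we replace it with the outermost minimal surface enclosing it; that surface has area $\ge A(g)$, which only strengthens the next step.

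\textbf{Step 2: Penrose inequality.} The manifold $(N,g)$ is complete with compact minimal boundary $\Sigma_*$ and one AF end $e_-$, and it has $R_g\ge0$. Bray's Riemannian Penrose inequality (which allows a possibly disconnected outermost horizon) then gives
\[
m_-\ \ge\ \sqrt{|\Sigma_*|/16\pi}=m_+ .
\]

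\textbf{Step 3: rigidity.} Suppose $m_-=m_+$. Then equality holds in Penrose, so $(N,g)$ is isometric to a Schwarzschild exterior of mass $m_+$ with horizon $\Sigma_*$. Both sides of $\Sigma_*$ are then Schwarzschild exteriors of the same mass, glued along the totally geodesic horizon sphere of area $16\pi m_+^2$. The isometries on the two sides match along $\Sigma_*$: the induced metrics are both round of the same area, and the second fundamental forms both vanish. The metric $g$ is smooth, and in conformally flat coordinates $g=(1+m_+/2r)^4\delta$ it is determined on each side by these data. Since the two-sided Schwarzschild metric is the unique smooth extension (it is real-analytic, and the reflection $r\mapsto m_+^2/4r$ identifies the two exteriors), $(M,g)$ is the two-sided Schwarzschild manifold. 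Conversely, the two-sided Schwarzschild manifold has $m_-=m_+$ by this reflection symmetry, and it satisfies the equality hypothesis by Theorem~\ref{thm:main}.

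The main obstacle I expect is Step~1: justifying that $\Sigma_*$ qualifies as an admissible boundary for the Penrose inequality on $N$. The Penrose inequality needs an outermost (or outer-minimizing) minimal boundary, and minimality of area among separating surfaces must be translated into this condition with respect to $e_-$. I would first argue directly that an area minimizer in $\mathcal S$ is outward minimizing toward $e_-$, so Huisken--Ilmanen/Bray applies in its outer-minimizing form (the mass bounds $\sqrt{|\Sigma|/16\pi}$ for outer-minimizing minimal $\Sigma$). A secondary subtlety is the gluing in Step~3. I would handle it by noting that a static vacuum metric, or more simply the explicit Schwarzschild metric, is determined by Cauchy data on a smooth hypersurface via real analyticity of the Schwarzschild metric on each closed side together with $C^2$ matching.
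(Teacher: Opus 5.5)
Your inequality argument is sound, and it takes a different route from the paper. The paper gets $m_-\ge m_+$ from Zhu's mass--systole inequality $m_-\ge\sqrt{A(g)/16\pi}$ with $e_-$ distinguished. You instead apply the outer-minimizing Penrose inequality (equivalently the Hawking mass bound \eqref{eq:hawking-bound} with the ends exchanged) directly to the least-area sphere $\Sigma_*$ on its $e_-$-side. Your observation that $|\Sigma_*|=A(g)$ makes $\Sigma_*$ outward minimizing toward $e_-$ is exactly what this needs, and it is more direct once $\Sigma_*$ is available. One sentence is unjustified: ``It is therefore outermost.'' Least area does not exclude minimal surfaces of larger area enclosing $\Sigma_*$ toward $e_-$. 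The outer-minimizing form of Penrose does not need outermost, so the inequality survives.

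The gap is in Step 3. Penrose rigidity identifies only the region outside the outermost horizon, or outside the strictly minimizing hull where weak IMCF actually starts. You proved only that $\Sigma_*$ is outward minimizing toward $e_-$, not strictly, and you explicitly allowed replacing it by an outermost minimal surface $\Sigma'\subset\overline{D_{\Sigma_*}}$. In the equality case you then get $|\Sigma'|=A(g)$ and a Schwarzschild $e_-$-side of $\Sigma'$. Nothing controls the region between $\Sigma_*$ and $\Sigma'$, so ``$(N,g)$ is a Schwarzschild exterior with horizon $\Sigma_*$'' and the gluing do not follow. This identification is exactly what the paper works for when it proves $\widetilde\Sigma=\Sigma_*$, using the comparison principle, Lemma \ref{lem:strict}, and the strong maximum principle.

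In your setup the step closes quickly from the full conclusion of Theorem \ref{thm:main}. Any $\Gamma\in\mathcal S$ with $|\Gamma|=A(g)$ lies in $\mathcal S_O$, since every enclosing competitor $F$ has $\partial F\in\mathcal S$ and hence $P(F)\ge A(g)$. By the theorem, such a $\Gamma$ encloses $\Sigma_*$, i.e.\ $D_{\Sigma_*}\subset D_\Gamma$. If also $D_\Gamma\subset D_{\Sigma_*}$, then $\Gamma=\Sigma_*$. Apply this to $\Sigma'$: it is connected by Penrose rigidity, separating, and of area $A(g)$, so $\Sigma'=\Sigma_*$. Equivalently, this shows $\Sigma_*$ is strictly outward minimizing toward $e_-$.

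With this added, your route is complete and avoids the paper's comparison argument. The final gluing of two Schwarzschild exteriors along the common round, totally geodesic horizon is fine. Matching via Fermi coordinates and a rotation suffices; real-analyticity and Cauchy-data uniqueness are not needed.
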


In particular, if the mass--capacity equality holds from both ends,
possibly at different exponents, then $(M,g)$ is the Schwarzschild
double.

The next theorem gives a sufficient condition for the existence
of a horizon.

\begin{theorem}[A capacity gap detects a horizon]\label{thm:gap}
Assume $R_g\ge0$ and $A(g)>0$.  If
\[
 c_{M,p}\ <\ c_{O,p}
\]
for some $p\in(1,3)$, then $(M,g)$ contains a horizon.
\end{theorem}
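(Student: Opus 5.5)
We argue by contraposition: assume $(M,g)$ contains no horizon, and show that then $c_{M,p}=c_{O,p}$. It suffices to prove $c_{O,p}\le c_{\Sigma,p}$ for every $\Sigma\in\mathcal S$.

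\textbf{Step 1: an outward-minimizing hull.} Fix $\Sigma=\partial D_\Sigma\in\mathcal S$ and minimize perimeter $P(F)$ over finite-perimeter regions $F\supset D_\Sigma$ with $F\setminus D_\Sigma\Subset M$. Near $e_+$ we use large coordinate spheres, which are strictly mean convex by asymptotic flatness, as outer barriers. Minimizing sequences then stay in a fixed compact set, and lower semicontinuity of perimeter gives a minimizer $F_*$. Since $A(g)>0$, the set $F_*$ still separates the ends with $P(F_*)\ge A(g)>0$, so it does not degenerate. We take $F_*$ maximal, or minimal, among minimizers. Then $\Sigma' := \partial F_*\in\mathcal S_O$ by construction, because any competitor $F\supset F_*$ is also admissible for the original problem. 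Also $D_\Sigma\subset F_*$.

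\textbf{Step 2: comparing capacities.} The inclusion $D_\Sigma\subset F_*$ together with the monotonicity \eqref{eq:capacity-monotonicity} gives the wrong direction, $c_{\Sigma,p}\le c_{\Sigma',p}$. So we do not compare $\Sigma$ with its hull directly. Instead we exploit the gap. Suppose $c_{M,p}<c_{O,p}$, and choose $\Sigma_j\in\mathcal S$ with $c_{\Sigma_j,p}\to c_{M,p}$. Let $\Sigma_j'$ be their hulls. The regular part of $\Sigma'_j\setminus \overline{D_{\Sigma_j}}$ is area-minimizing from outside and has $H=0$ there, by the standard regularity of obstacle problems in dimension $3$. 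Where $\Sigma_j'$ touches $\Sigma_j$, the strict form of monotonicity from Appendix \ref{app:lemmas} applies: if $D_{\Sigma_j}\subsetneq F_{*,j}$ with positive-volume difference, then $c_{\Sigma_j,p}<c_{\Sigma_j',p}$. This alone gives no contradiction.

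The real mechanism is the following. Without a horizon, Zhu's construction \cite[Proposition 2.2]{Zhu} applies, and we expect it to show that sets in $\mathcal S_O$ can be placed arbitrarily deep toward $e_-$, where their capacity is small. Concretely, Zhu produces strictly outward-minimizing spheres $S_k$ with bounded area and $\sup|H_{S_k}|\to 0$. Suppose these $S_k$ did not escape to $e_-$. Then a subsequence would converge, by curvature estimates for almost-minimal outward-minimizing surfaces, to a smooth minimal separating surface, which is a horizon, a contradiction. Hence $S_k$ exits every compact set toward $e_-$, so $D_{S_k}\subset D_{\Sigma_j}$ eventually for each fixed $j$, after approximating $\Sigma_j$ by a compactly supported separating boundary. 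By \eqref{eq:capacity-monotonicity}, $c_{S_k,p}\le c_{\Sigma_j,p}$. Since $S_k\in\mathcal S_O$, this yields $c_{O,p}\le c_{\Sigma_j,p}\to c_{M,p}$, contradicting the gap.

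\textbf{Main obstacle.} The hard part is the compactness claim: if Zhu's spheres (or the hulls) stay in a compact region, they converge to a smooth minimal member of $\mathcal S$. Here we would use $A(g)>0$ to prevent the limit from collapsing. We would use the uniform area bound plus outward minimization, which gives local perimeter-minimizing properties up to an $o(1)$ mean-curvature error, to get varifold convergence and then regularity, since almost-minimizers in dimension $3$ are smooth. Finally, the limit must separate the ends, which follows from the $L^1$ convergence of the enclosed regions $D_{S_k}$. A secondary technical point is ensuring that each $\Sigma_j$ contains $D_{S_k}$ for large $k$. For this we truncate $D_{\Sigma_j}$ near $e_-$, which is harmless because capacity only depends on $D_{\Sigma_j}$ through a nested family.
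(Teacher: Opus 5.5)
Your closing mechanism in Step 2 is the paper's. With no horizon, Zhu's outward-minimizing spheres must leave every compact set through $e_-$, so they are eventually enclosed by any $D_\Sigma$, and $c_{O,p}\le c_{S_k,p}\le c_{\Sigma,p}$. Step 1, the hull discussion and the truncation remark can all be dropped. $D_\Sigma$ already contains an end-neighborhood $U_-$ of $e_-$, and once $S_k\subset U_-$ you can extend any admissible $f$ for $\Sigma$ by zero on $D_{S_k}$, as the paper does. The escape step, however, has gaps as written. Your dichotomy (``a subsequence converges to a horizon, or the spheres escape to $e_-$'') omits escape through $e_+$. This must be excluded with the uniform area bound. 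A separating surface lying outside a large coordinate sphere $\{|x|=r\}$ of the asymptotically flat chart encloses that sphere, so its area is of order at least $r^2$.

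More seriously, the compactness sketch does not work as stated. Outward minimality is one-sided: for surfaces with $H=\epsilon_k>0$ it only controls outward competitors. It gives neither two-sided almost-minimality nor stability, so it cannot supply curvature estimates. Those come from the stable $\mu$-bubble structure of Zhu's spheres, via the Zhou--Zhu estimates. There is also a case you miss even with local smooth convergence. Spheres that meet a fixed compact set can stretch ever deeper into $e_-$, about which nothing is assumed, and converge to a noncompact, finite-area minimal surface. Then $L^1_{\mathrm{loc}}$ convergence of $D_{S_k}$ does not give a limit region containing an end-neighborhood of $e_-$. The limit is therefore not a member of $\mathcal S$ and not a horizon, so your claim that separation ``follows from the $L^1$ convergence'' fails exactly here. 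This case needs a separate argument; the paper invokes Gromov--Lawson to exclude noncompact finite-area limits under $R_g\ge0$. After that, the limit is a compact minimal sphere in $\mathcal S$, that is, a horizon.
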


\begin{remark}
For $p=2$, the assumption $A(g)>0$ can be omitted if
$\operatorname{Ric}_g$ is bounded from below and $c_{M,2}>0$.
Indeed, let $u$ be the bounded harmonic function obtained by exhaustion
in \cite[Proposition 3.1]{MiaoJGA}.
The Cheng--Yau gradient estimate \cite{ChengYau} gives
$\|\nabla u\|_{L^\infty(M)}<\infty$.
Since $4\pi c_{M,2}\le \|\nabla u\|_{L^\infty(M)}|\Sigma|$
for every $\Sigma\in\mathcal S$, we obtain
\[
 A(g)\ge \frac{4\pi c_{M,2}}{\|\nabla u\|_{L^\infty(M)}}>0.
\]
\end{remark}

The paper is organized as follows.  Section \ref{sec:proofs} records the
normalized inequalities of \cite{XYZ} and proves the three main results.
Two elementary capacity lemmas are deferred to Appendix
\ref{app:lemmas}.

\subsection*{Acknowledgements}
The author thanks his advisor, Professor Pengzi Miao, for introducing
him to the notion of outward-minimizing capacity and for his guidance
and many valuable discussions.

\begingroup\emergencystretch=2em
This research was supported by Global - Learning \& Academic research
institution for Master's$\cdot$PhD students, and Postdocs (G-LAMP)
Program of the National Research Foundation of Korea (NRF) grant
funded by the Ministry of Education (No. RS-2025-25442252).
\par\endgroup

\section{Proofs of the main results}\label{sec:proofs}

\subsection{Normalized capacity inequalities}
\label{sec:prelim}

For a smooth surface $\Sigma\in\mathcal S$ set
\[
 \delta_\Sigma:=\Big(\frac1{16\pi}\int_\Sigma H^2\,d\sigma\Big)^{1/2},
 \qquad
 \kappa_\Sigma:=\frac{1-\delta_\Sigma}{1+\delta_\Sigma}
 \quad(\text{if }\delta_\Sigma<1),
\]
and suppose that $\delta_\Sigma<1$ and that the $e_+$-side
$M\setminus D_\Sigma$ is a one-end asymptotically flat manifold with
connected boundary $\Sigma$, $H_2(M\setminus D_\Sigma,\Sigma)=0$, and
$R_g\ge0$.  The identity
$\frac{4\kappa_\Sigma}{(1+\kappa_\Sigma)^2}=1-\delta_\Sigma^2$ shows
that $\kappa_\Sigma$ is precisely the parameter appearing in 
\cite{XYZ}.  Moreover,
$a(p-1)=3-p$ and
$2(I_a(\kappa_\Sigma)\mathfrak c_p)^{1/a}
=(I_a(\kappa_\Sigma)/I_a(1))^{1/a}c_{\Sigma,p}$, where
$\mathfrak c_p=(\pcap_p(\Sigma)/4\pi)^{1/(p-1)}$.  Thus, in the
normalization \eqref{eq:normalized-capacity}, the corresponding
inequalities of \cite{XYZ} read as follows for every $p\in(1,3)$:
\begin{align}
 m_+&\ \ge\
 \Big(\frac{I_a(\kappa_\Sigma)}{I_a(1)}\Big)^{1/a}c_{\Sigma,p},
 \label{eq:xyz-mass}\\
 \sqrt{\frac{|\Sigma|}{16\pi}}
 &\ \ge\ \frac{(1+\kappa_\Sigma)^2}{4\kappa_\Sigma}
 \Big(\frac{I_a(\kappa_\Sigma)}{I_a(1)}\Big)^{1/a}c_{\Sigma,p}.
 \label{eq:xyz-area}
\end{align}
If $\Sigma\in\mathcal S_O$, the
monotonicity of the Hawking mass along weak inverse mean curvature flow
\cite{HuiskenIlmanen} gives
\begin{equation}\label{eq:hawking-bound}
 m_+\ \ge\ m_H(\Sigma)\ =\ \sqrt{\frac{|\Sigma|}{16\pi}}\,
 (1-\delta_\Sigma^2).
\end{equation}
For $p=2$ one has $I_1(t)=\frac{t}{1+t}$, and \eqref{eq:xyz-mass},
\eqref{eq:xyz-area} reduce to Miao's mass-to-capacity ratio
\eqref{eq:miaoratio} and to the Bray--Miao capacity bound
\cite{BrayMiao}.

\subsection{Mass--capacity inequality and rigidity}

\begin{proof}[Proof of Theorem \ref{thm:main}]
Assume $A(g)>0$. Zhu's construction \cite{Zhu}, which implements
Gromov's $\mu$-bubble method \cite{GromovFour}, gives a sequence
$\{\Gamma_j\}\subset\mathcal S_O$ of strictly outward-minimizing $2$-spheres
with constant mean curvatures $\epsilon_j\to0$ and uniformly bounded
areas.  The $e_+$-side of each $\Gamma_j$ is a one-end asymptotically
flat manifold diffeomorphic to $\R^3$ minus a ball. In particular,
$\delta_{\Gamma_j}=\epsilon_j\sqrt{|\Gamma_j|/16\pi}\to0$ and
$\kappa_{\Gamma_j}\to1$.  Since $c_{O,p}\le c_{\Gamma_j,p}$,
\eqref{eq:xyz-mass} gives
\[
 m_+\ \ge\
 \Big(\frac{I_a(\kappa_{\Gamma_j})}{I_a(1)}\Big)^{1/a}
 c_{\Gamma_j,p}
 \ \ge\
 \Big(\frac{I_a(\kappa_{\Gamma_j})}{I_a(1)}\Big)^{1/a}c_{O,p}.
\]
Letting $j\to\infty$ proves \eqref{eq:main-inequality}.  We turn to
the equality case.

Suppose first that a horizon $\Sigma_*$ with the stated properties
exists.  Since the $e_+$-side of $\Sigma_*$ is the Schwarzschild
exterior of mass $m_+$, the Schwarzschild normalization 
\eqref{eq:normalized-capacity} gives $c_{\Sigma_*,p}=m_+$ for every
 $p$; since every $\Gamma\in\mathcal S_O$ encloses $\Sigma_*$,
 monotonicity \eqref{eq:capacity-monotonicity} gives
 $c_{\Sigma_*,p}\le c_{\Gamma,p}$.  Taking the infimum over
$\Gamma\in\mathcal S_O$ gives
$c_{O,p}=c_{\Sigma_*,p}=m_+$ for every $p$, and in particular equality
holds in \eqref{eq:main-inequality}.

Conversely, suppose $m_+=c_{O,p_0}$ for some $p_0\in(1,3)$.  Use Zhu's
nested rigidity construction \cite[Section 3]{Zhu}, and denote its
nested rigidity construction \cite{Zhu}, and denote its
strictly outward-minimizing CMC spheres by
$\{\Lambda_j\}\subset\mathcal S_O$,
with $D_{\Lambda_{j+1}}\subset D_{\Lambda_j}$ and mean curvatures
$\eta_j\to0$.  Their areas and their $p_0$-capacities are
non-increasing, so the limits
\[
 a_\infty:=\lim_{j\to\infty}|\Lambda_j|,
 \qquad
 c_\infty:=\lim_{j\to\infty}c_{\Lambda_j,p_0}
\]
exist.  Since the areas are bounded and $\eta_j\to0$, the corresponding
$\delta_{\Lambda_j}\to0$ and $\kappa_{\Lambda_j}\to1$.  The area
estimate \eqref{eq:xyz-area} and the Hawking bound
\eqref{eq:hawking-bound} give
\[
 c_{O,p_0}\ \le\ c_\infty\ \le\
 \sqrt{\frac{a_\infty}{16\pi}}\ \le\ m_+.
\]
Thus all inequalities above are equalities:
\begin{equation}\label{eq:pinch}
 c_\infty=\sqrt{\frac{a_\infty}{16\pi}}=m_+,
 \qquad a_\infty=16\pi m_+^2.
\end{equation}

By \eqref{eq:pinch}, $m_+^2=a_\infty/(16\pi)$.  The Hawking bound
\eqref{eq:hawking-bound} then gives
$|\Lambda_j|\le a_\infty+O(\eta_j^2)$, while nestedness gives the
exact lower bound $|\Lambda_k|\ge a_\infty$ for every later member of
 the chain.  These are precisely the area inputs in Zhu's rigidity
 argument \cite{Zhu}.  Zhu's coarea estimate forces the spheres to meet a
 fixed compact set.  Curvature estimates for stable $\mu$-bubbles due to
 Zhou--Zhu \cite{ZhouZhu} give local smooth convergence, while a result
 of Gromov--Lawson \cite{GromovLawson} rules out a noncompact finite-area
limit.  Hence a subsequence converges to an outward minimizing minimal
$2$-sphere $\Sigma_*\in\mathcal S_O$ with
$|\Sigma_*|=a_\infty=16\pi m_+^2$.

Since $\Sigma_*$ is outward minimizing and minimal and
$m_+=\sqrt{|\Sigma_*|/16\pi}$, the equality case of the Riemannian
Penrose inequality \cite{HuiskenIlmanen} shows that its $e_+$-side is
the Schwarzschild exterior of mass $m_+$.  By the Schwarzschild normalization in 
\eqref{eq:normalized-capacity}, $c_{\Sigma_*,p}=m_+$ for every
$p\in(1,3)$; in particular $\Sigma_*$ realizes the infimum
$c_{O,p_0}$.

\emph{Every $\Gamma\in\mathcal S_O$ encloses $\Sigma_*$.}
Let $\Gamma\in\mathcal S_O$ and suppose
$\operatorname{Vol}_g(D_{\Sigma_*}\setminus D_\Gamma)>0$.  Set
$E:=D_{\Sigma_*}\cap D_\Gamma$ and $\Sigma_E:=\partial E$.  Then
$\Sigma_E\in\mathcal S$, and Lemma \ref{lem:hull} shows that
$\Sigma_E\in\mathcal S_O$.

Choose a smooth separating region $D_0\subset E$, and let $H$ be a
least-perimeter enclosure of $D_0$.
Submodularity and the outward minimality of $E$ give
$P(H\cap E)\le P(H)$, so $F:=H\cap E$ is also a least-perimeter
enclosure of $D_0$.  Hence $\partial F\in\mathcal S_O$ is of class
$C^{1,1}$ by \cite{HuiskenIlmanen}.
Since $F\subset E$ and
$\operatorname{Vol}_g(D_{\Sigma_*}\setminus F)>0$, Lemma
\ref{lem:strict} gives
\[
 c_{O,p_0}\ \le\ c_{\partial F,p_0}\ <\ c_{\Sigma_*,p_0}\ =\ c_{O,p_0},
\]
a contradiction.  Hence every $\Gamma\in\mathcal S_O$ encloses
$\Sigma_*$.

\emph{Least area and equality at every exponent.}
For arbitrary $\Sigma\in\mathcal S$, its outward minimizing hull
$(D_\Sigma)^\sharp$ satisfies
$P((D_\Sigma)^\sharp)\le|\Sigma|$ and
$\partial((D_\Sigma)^\sharp)\in\mathcal S_O$.
The preceding enclosure conclusion therefore shows that
$D_{\Sigma_*}\subset(D_\Sigma)^\sharp$.
Since $\Sigma_*$ is outward minimizing,
$|\Sigma_*|\le P((D_\Sigma)^\sharp)$.
Thus
\[
 |\Sigma_*|\ \le\ P((D_\Sigma)^\sharp)\ \le\ |\Sigma|
 \qquad\text{for every }\Sigma\in\mathcal S.
\]
Taking the infimum yields $|\Sigma_*|\le A(g)$, whereas
$\Sigma_*\in\mathcal S$ gives $A(g)\le|\Sigma_*|$.  Consequently
$|\Sigma_*|=A(g)=16\pi m_+^2$.  Finally, for every $p\in(1,3)$ and every
$\Gamma\in\mathcal S_O$, the enclosure conclusion and
\eqref{eq:capacity-monotonicity} give
$c_{\Sigma_*,p}\le c_{\Gamma,p}$, while the Schwarzschild normalization 
gives $c_{\Sigma_*,p}=m_+$; hence $c_{O,p}=c_{\Sigma_*,p}=m_+$ for
every $p$.
\end{proof}

\begin{proof}[Proof of Corollary \ref{cor:mass-comparison}]
Zhu's mass--systole inequality \cite{Zhu}, applied with
$e_-$ as the distinguished end, gives $m_-\ge\sqrt{A(g)/16\pi}$, and
Theorem \ref{thm:main} gives $A(g)=16\pi m_+^2$; hence $m_-\ge m_+$.

Suppose $m_-=m_+$.  Then equality holds in Zhu's inequality for $e_-$,
and Zhu's rigidity result \cite{Zhu} produces a horizon
$\widetilde\Sigma\in\mathcal S$ of area $A(g)$ that is strictly outward
minimizing toward $e_-$ and whose $e_-$-side is the Schwarzschild
exterior of mass $m_-$.  Toward $e_+$ we also have
$\widetilde\Sigma\in\mathcal S_O$.  Indeed, if
$F\supset D_{\widetilde\Sigma}$ is any finite-perimeter competitor, then
$\partial F\in\mathcal S$.  Hence \eqref{eq:area-infimum} gives directly
\[
 P(F)\ \ge\ A(g)\ =\ |\widetilde\Sigma|.
\]

The minimal surface $\widetilde\Sigma$ cannot enter the $e_+$-side
Schwarzschild exterior $\Omega$ of $\Sigma_*$: the region $\Omega$ is foliated by
strictly mean-convex coordinate spheres, and sliding the foliation
inward to a first touching point with $\widetilde\Sigma$ would violate
the comparison principle for the mean curvature.  Hence
$\widetilde\Sigma$ is enclosed by $\Sigma_*$.  If
$\widetilde\Sigma\cap\Sigma_*=\emptyset$, then the outward minimizing
surface $\widetilde\Sigma$ is strictly enclosed by $\Sigma_*$, and
Lemma \ref{lem:strict} gives
\[
 c_{O,p_0}\le c_{\widetilde\Sigma,p_0}
 <c_{\Sigma_*,p_0}=c_{O,p_0},
\]
a contradiction.  Hence $\widetilde\Sigma$ and $\Sigma_*$
touch; as both are minimal and $\widetilde\Sigma$ lies on one side of
$\Sigma_*$, the strong maximum principle gives
$\widetilde\Sigma=\Sigma_*$.  Therefore $(M,g)$ is the union of two
Schwarzschild exteriors of mass $m_+$ glued along their common horizon,
i.e.\ the two-sided spatial Schwarzschild manifold.  The converse is
immediate from the explicit computation on the Schwarzschild double.
\end{proof}

\subsection{Capacity gap and horizons}\label{sec:horizon}

\begin{proof}[Proof of Theorem \ref{thm:gap}]
We argue by contraposition: assuming $(M,g)$ contains no horizon, we
show $c_{M,p}=c_{O,p}$ for every $p\in(1,3)$.

Let $\{\Gamma_j\}\subset\mathcal S_O$ be the outward minimizing
sequence constructed by Zhu \cite{Zhu} and used above.  By the same
compactness argument as in the rigidity proof, a
subsequence meeting a fixed compact set would produce a horizon.
The uniform area bound rules out escape through the asymptotically flat
end $e_+$.  Therefore the spheres leave every compact set through $e_-$.

Fix $\Sigma\in\mathcal S$ and $p\in(1,3)$.  Choose an end-neighborhood
$U_-\subset D_\Sigma$ of $e_-$.  The preceding escape conclusion gives
$\Gamma_j\subset U_-$ for all large $j$.  If $f$ is admissible for
$\Sigma$, it vanishes on $U_-$; replacing it by zero on $D_{\Gamma_j}$
produces an admissible function for $\Gamma_j$ without increasing its
energy.  Therefore
\[
 c_{O,p}\ \le\ c_{\Gamma_j,p}\ \le\ c_{\Sigma,p}.
\]
Taking the infimum over $\Sigma\in\mathcal S$ yields
$c_{O,p}\le c_{M,p}$; the reverse inequality is trivial.
\end{proof}

\appendix

\section{Two capacity lemmas}\label{app:lemmas}

We record the two elementary facts used in the proof of Theorem
\ref{thm:main} in Section \ref{sec:proofs}.

\begin{lemma}[Strict monotonicity]\label{lem:strict}
Let $\Sigma_1\in\mathcal S_O$ be of class $C^{1,1}$,
and let $\Sigma_2\in\mathcal S$ satisfy
$D_{\Sigma_1}\subset D_{\Sigma_2}$ and
$\operatorname{Vol}_g(D_{\Sigma_2}\setminus D_{\Sigma_1})>0$.  Then
$c_{\Sigma_1,p}<c_{\Sigma_2,p}$ for every $p\in(1,3)$.
\end{lemma}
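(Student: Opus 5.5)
Since $c_{\Sigma,p}$ is an increasing function of $\pcap_p(\Sigma)$, it suffices to show $\pcap_p(\Sigma_1)<\pcap_p(\Sigma_2)$. Weak inequality is \eqref{eq:capacity-monotonicity}: any $f$ admissible for $\Sigma_2$ vanishes a.e. on $D_{\Sigma_2}\supset D_{\Sigma_1}$, so it is admissible for $\Sigma_1$. We may assume $\pcap_p(\Sigma_2)<\infty$. The plan is to argue by contradiction. Suppose the capacities are equal. Let $u_2$ be the minimizer for $\Sigma_2$; it exists by the direct method and strict convexity of the $p$-energy, using finite energy and the condition $u_2\to1$ at $e_+$, which is stable under weak limits in the AF chart. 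Then $u_2$ is admissible for $\Sigma_1$ with the same energy. By strict convexity the minimizer for $\Sigma_1$ is unique, so $u_2=u_1$, the $p$-capacitary potential of $\Sigma_1$.

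The key step is to show that $u_1>0$ a.e. on $D_{\Sigma_2}\setminus D_{\Sigma_1}$, which has positive volume. This contradicts $u_2=0$ there. To prove positivity, first note that $u_1$ is $p$-harmonic on the open set $M\setminus\overline{D_{\Sigma_1}}$. Here we use the $C^{1,1}$ regularity of $\Sigma_1$, so that $\overline{D_{\Sigma_1}}$ agrees with $D_{\Sigma_1}$ up to measure zero and the exterior is a genuine open domain with Lipschitz boundary. Also $0\le u_1\le1$ by truncation. The exterior $M\setminus \overline{D_{\Sigma_1}}$ must be connected, or at least every component meets $e_+$. This is where outward minimality enters. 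A bounded component $V$ of the exterior, with $\overline V$ compact, would give $F=D_{\Sigma_1}\cup V\supset D_{\Sigma_1}$ with $F\setminus D_{\Sigma_1}\Subset M$. Its perimeter satisfies $P(F)=|\Sigma_1|-P(V)<|\Sigma_1|$, since $\partial V\subset\Sigma_1$ is removed, and this contradicts $\Sigma_1\in\mathcal S_O$. On any component touching $e_+$, $u_1\to1$ at infinity, so $u_1\not\equiv0$ there. The strong maximum principle (Harnack inequality) for nonnegative $p$-harmonic functions then gives $u_1>0$ on the whole component. Hence $u_1>0$ on $M\setminus\overline{D_{\Sigma_1}}$, which contains $D_{\Sigma_2}\setminus D_{\Sigma_1}$ up to a null set.

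I expect the main obstacle to be the measure-theoretic bookkeeping. The regions are only defined up to null sets, so we need a representative of $D_{\Sigma_1}$ whose topological boundary is the $C^{1,1}$ surface. We also need the decomposition of the complement into components to respect perimeter additivity in the argument excluding bounded components. The $C^{1,1}$ hypothesis should handle both: it gives an open representative whose topological boundary is the reduced boundary. Then $P(F)=|\Sigma_1|-\mathcal H^2(\partial V)$, with $\mathcal H^2(\partial V)>0$ whenever $V$ is nonempty and open with compact closure. A lesser point is the existence of the minimizer $u_2$. If that is delicate, one can replace it with a near-minimizing sequence $f_k$ for $\Sigma_2$. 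By uniform convexity of $L^p$, the $f_k$ converge in energy to $u_1$. Since they vanish on $D_{\Sigma_2}\setminus D_{\Sigma_1}$, so would the limit, giving the same contradiction.
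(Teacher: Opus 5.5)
Your proposal is correct and follows essentially the same route as the paper. Both arguments use outward minimality to exclude bounded components of the open exterior of the $C^{1,1}$ representative of $D_{\Sigma_1}$, apply the strong maximum principle to get $u_1>0$ there, and then invoke uniqueness of the strictly convex minimizer to rule out $u_2=u_1$, since $u_2$ vanishes on $D_{\Sigma_2}\setminus D_{\Sigma_1}$; your contradiction framing and the near-minimizing-sequence fallback are only cosmetic differences.
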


\begin{proof}
Take the closed representative of $D_{\Sigma_1}$ with $C^{1,1}$
boundary.  Its open exterior has no bounded connected component:
filling such a component would remove its boundary and strictly
decrease perimeter, contrary to outward minimality.
Since the exterior contains only the end $e_+$, it is connected.
Let $u_i$ be the $p$-capacitary potential associated
with $\Sigma_i$.  The minimizer is unique by strict convexity of
$\xi\mapsto|\xi|^p$ for $p>1$, and $u_1$ is positive on the exterior of
$D_{\Sigma_1}$ by the strong maximum principle for $p$-harmonic
functions \cite{HKM}.  Since $D_{\Sigma_1}\subset D_{\Sigma_2}$, the
function $u_2$ is an admissible competitor for the $\Sigma_1$-problem,
with $p$-energy $\pcap_p(\Sigma_2)$.  It vanishes on the positive-volume
set $D_{\Sigma_2}\setminus D_{\Sigma_1}$, which lies in the open
exterior up to a null set, and hence differs from $u_1$.
Uniqueness of the minimizer gives
$\pcap_p(\Sigma_1)<\pcap_p(\Sigma_2)$, and
\eqref{eq:normalized-capacity} converts this into the assertion.
\end{proof}

\begin{lemma}[Intersection of outward-minimizing boundaries]\label{lem:hull}
If $\Sigma_1,\Sigma_2\in\mathcal S_O$, then
\[
 \partial(D_{\Sigma_1}\cap D_{\Sigma_2})\in\mathcal S_O.
\]
\end{lemma}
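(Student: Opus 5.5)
The plan is to derive the claim from the submodularity of perimeter,
\[
 P(A\cap B)+P(A\cup B)\ \le\ P(A)+P(B),
\]
valid for finite-perimeter sets. Set $E:=D_{\Sigma_1}\cap D_{\Sigma_2}$. The first step is to check that $\partial E\in\mathcal S$. Each $D_{\Sigma_i}$ contains an end-neighborhood of $e_-$, so their intersection does too. The complement of $E$ contains the union of the two complements, and each of those contains an end-neighborhood of $e_+$. Finally, $E$ has finite perimeter by the inequality above, or simply because $P(A\cap B)\le P(A)+P(B)$.

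For outward minimality, take a finite-perimeter region $F\supset E$ with $F\setminus E\Subset M$. We must show $P(E)\le P(F)$. We compare in two stages, first against $D_{\Sigma_1}$ and then against $D_{\Sigma_2}$.

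\emph{Stage 1.} Put $F_1:=F\cup D_{\Sigma_1}$. Then $F_1\supset D_{\Sigma_1}$ and $F_1\setminus D_{\Sigma_1}\subset F\setminus E$, which is relatively compact. Outward minimality of $\Sigma_1$ gives $P(D_{\Sigma_1})\le P(F_1)$. Submodularity applied to $F$ and $D_{\Sigma_1}$ gives
\[
 P(F\cap D_{\Sigma_1})\ \le\ P(F)+P(D_{\Sigma_1})-P(F_1)\ \le\ P(F).
\]

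\emph{Stage 2.} Set $G:=F\cap D_{\Sigma_1}$. Then $G\supset E$, since $F\supset E$ and $D_{\Sigma_1}\supset E$. Put $G_2:=G\cup D_{\Sigma_2}$. Then $G_2\supset D_{\Sigma_2}$, and
\[
 G_2\setminus D_{\Sigma_2}\ \subset\ G\setminus D_{\Sigma_2}\ \subset\ F\setminus E,
\]
which is relatively compact. Here we use that a point of $G$ outside $D_{\Sigma_2}$ is in $F$, and it cannot lie in $E$ because $E\subset D_{\Sigma_2}$. Outward minimality of $\Sigma_2$ gives $P(D_{\Sigma_2})\le P(G_2)$. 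Submodularity then gives
\[
 P(G\cap D_{\Sigma_2})\ \le\ P(G)\ \le\ P(F).
\]
Since $G\cap D_{\Sigma_2}=F\cap D_{\Sigma_1}\cap D_{\Sigma_2}=E$ (using $F\supset E$), we conclude $P(E)\le P(F)$. This is exactly outward minimality of $\partial E$.

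The step needing the most care is justifying the use of submodularity when the sets involved have infinite volume and the perimeters are measured on noncompact $M$. I would handle this in one of two ways. One option is to apply the localized inequality
\[
 P(A\cap B;U)+P(A\cup B;U)\ \le\ P(A;U)+P(B;U)
\]
on a large bounded open set $U$ containing the compact differences. The other is to note that all the sets agree outside a compact set, so the perimeter comparisons reduce to compactly supported ones. All inclusions are understood up to null sets, consistent with the paper's convention.
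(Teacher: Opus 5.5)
Your proposal is correct and follows essentially the same route as the paper. The paper also applies submodularity twice: first to $D_{\Sigma_1}$ and $F$, then to $D_{\Sigma_2}$ and $D_{\Sigma_1}\cap F$. Each time it uses outward minimality of $\Sigma_i$ against the union competitor, whose difference from $D_{\Sigma_i}$ lies in $F\setminus E$, to obtain $P(E)\le P(D_{\Sigma_1}\cap F)\le P(F)$.
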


\begin{proof}
Set $D_i:=D_{\Sigma_i}$ and $E:=D_1\cap D_2$.  The region $E$ has finite
perimeter, contains an end-neighborhood of $e_-$, and its complement
contains an end-neighborhood of $e_+$; hence $\partial E\in\mathcal S$.
Let $F\supset E$ be a finite-perimeter region with $F\setminus E\Subset M$.
Submodularity of the perimeter \cite{Maggi} gives
\[
 P(D_1\cap F)+P(D_1\cup F)\le P(D_1)+P(F).
\]
Moreover,
$(D_1\cup F)\setminus D_1\subset
F\setminus(D_1\cap D_2)\Subset M$.
Since $D_1$ is outward minimizing,
$P(D_1)\le P(D_1\cup F)$, and therefore
\begin{equation}\label{eq:first-intersection}
 P(D_1\cap F)\le P(F).
\end{equation}
Applying submodularity to $D_2$ and $D_1\cap F$ gives
\[
 P(D_1\cap D_2)+P(D_2\cup(D_1\cap F))
 \le P(D_2)+P(D_1\cap F),
\]
where we used $F\supset D_1\cap D_2$.  Also,
\[
 [D_2\cup(D_1\cap F)]\setminus D_2
 \subset F\setminus(D_1\cap D_2)\Subset M.
\]
The outward minimizing property of $D_2$ gives
$P(D_2)\le P(D_2\cup(D_1\cap F))$.  Hence, by
\eqref{eq:first-intersection},
\[
 P(D_1\cap D_2)\le P(D_1\cap F)\le P(F),
\]
as required.
\end{proof}

\end{document}